\documentclass[12pt]{amsart}
\title{Strongly stable fields}
\author{Will Johnson}
\email{willjohnson@fudan.edu.cn}
\address{Department of Mathematics \\ Fudan University \\ 220 Handan Road, Shanghai, China}
\author{Jinhe Ye}
\email{jinheye@hku.hk}
\address{Department of Mathematics, Run Run Shaw Building, The University of Hong Kong, Pokfulam, Hong Kong}
\thanks{The first author was supported by the National Natural Science Foundation
  of China (Grant No.\@ W2532009).  The second author was supported by his startup fund from the University of Hong Kong.}
\date{\today}

\usepackage{amsmath, amssymb, amsthm}    	
\usepackage{fullpage} 	
\usepackage{amscd}
\usepackage{hyperref}
\usepackage[all]{xy}
\usepackage{centernot,enumitem}

\usepackage{CJKutf8}

\def\Ind#1#2{#1\setbox0=\hbox{$#1x$}\kern\wd0\hbox to 0pt{\hss$#1\mid$\hss}
\lower.9\ht0\hbox to 0pt{\hss$#1\smile$\hss}\kern\wd0}

\def\ind{\mathop{\mathpalette\Ind{}}}

\def\notind#1#2{#1\setbox0=\hbox{$#1x$}\kern\wd0
\hbox to 0pt{\mathchardef\nn=12854\hss$#1\nn$\kern1.4\wd0\hss}
\hbox to 0pt{\hss$#1\mid$\hss}\lower.9\ht0 \hbox to 0pt{\hss$#1\smile$\hss}\kern\wd0}

\DeclareMathOperator{\wt}{wt}

\newcommand{\Gal}{\operatorname{Gal}}

\newcommand{\characteristic}{\operatorname{char}}

\newcommand{\Aut}{\operatorname{Aut}}

\newcommand{\acl}{\operatorname{acl}}
\newcommand{\dcl}{\operatorname{dcl}}
\newcommand{\tp}{\operatorname{tp}}

\newcommand{\Stab}{\operatorname{Stab}}

\newtheorem{theorem}{Theorem}[section] 
\newtheorem{lemma}[theorem]{Lemma}

\newtheorem{fact}[theorem]{Fact}

\newtheorem*{theorem-star}{Theorem}
\newtheorem*{lemma-star}{Theorem}
\newtheorem*{conjecture-star}{Conjecture}

\theoremstyle{definition}
\newtheorem{definition}[theorem]{Definition}

\newtheorem{remark}[theorem]{Remark}

\newtheorem*{ai}{AI statement}

\newcommand{\Qq}{\mathbb{Q}}

\newcommand{\Zz}{\mathbb{Z}}

\newcommand{\Mm}{\mathbb{M}}

\let\phi\varphi               

\begin{document}

\begin{abstract}
    Let $K$ be a field which is strongly stable, that is, stable and strongly dependent.  Then $K$ is finite or algebraically closed. This answers a question of Shelah.
\end{abstract}

\maketitle

\section{Introduction}

Recall that a theory $T$ is \emph{strongly dependent} (or \emph{strongly NIP}) if the following property does \emph{not} hold:
\begin{itemize} \item There is a model $M \models T$ containing countably many mutually indiscernible sequences $I_1$, $I_2$, $I_3$, \ldots and a finite tuple $a$ such that each sequence $I_j$ fails to be indiscernible over $a$.
\end{itemize}
A structure $M$ is strongly dependent if its complete theory is.  For other equivalent definitions, see \cite[Section 2]{dp-add} or \cite[Chapter 4]{NIPguide}.  Strongly dependent theories were introduced by Shelah~\cite{Shelah-Dependent} as an analogue of superstability for NIP theories:
\begin{quote}
    ``Superstable'' is to ``stable'' as ``strongly dependent'' is to ``NIP''.
\end{quote}
In particular, superstable theories are strongly dependent \cite[Corollary~4.35]{NIPguide} and strongly dependent theories are NIP (see \cite[Observation~4.13, Definition~4.23]{NIPguide}, or \cite[Observation~2.2, Definition~2.5]{firstDP}).

A theory is \emph{strongly stable} if it is both stable and strongly dependent.  The class of strongly stable theories is somewhat larger than the class of superstable theories.  For example, if $V_1 \supsetneq V_2 \supsetneq \cdots$ is a descending chain of $\Qq$-vector spaces, then the structure $(V_1,+,V_2,V_3,\ldots)$ is strongly stable but not superstable.  It turns out that a stable theory is strongly stable if and only if every complete type $p \in S_n(M)$ has finite weight $\wt(p)$ \cite[Corollary~2.13]{firstDP}.

There is a conjectural classification of strongly dependent fields, due to Halevi, Hasson, and Jahnke \cite{hhj1}.  So far, this conjectural classification is known only in the finite-dimensional (i.e., dp-finite) case \cite{dp-minimal,dp-finite-fields-vi}.

In this short note, we verify the conjecture in the case of strongly stable fields:
\begin{theorem}[{= Theorem~\ref{thm:main-copy}}] \label{thm:main}
    Let $K$ be a field which is strongly stable.  Then $K$ is finite or algebraically closed.
\end{theorem}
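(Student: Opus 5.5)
We argue by contradiction, assuming $K$ is strongly stable, infinite, and not algebraically closed. The goal is to produce, in a saturated extension, a complete type of infinite weight; by \cite[Corollary~2.13]{firstDP} this contradicts strong stability. First we reduce to a convenient setting. By the classical facts on stable fields, an infinite stable field is separably closed (Macintyre--Cherlin--Shelah in the superstable case, and in general by Artin--Schreier considerations together with the absence of definable orderings and valuations). So if $K$ is not algebraically closed, it must be separably closed of characteristic $p>0$ and not perfect, i.e.\ $[K:K^p] > 1$. Any finite extension of $K$ is interpretable in $K$ and hence strongly stable, so it suffices to show that an imperfect separably closed field is never strongly dependent.

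The key step is a direct construction in this setting. Fix $t \in K \setminus K^p$ and pass to a monster model $\Mm$. The descending chain of definable subfields
\[
\Mm \supsetneq \Mm^{p} \supsetneq \Mm^{p^2} \supsetneq \cdots
\]
behaves like the chain $V_1 \supsetneq V_2 \supsetneq \cdots$ in the introduction, but the indices are now \emph{nested} in a way that should yield infinite weight. Consider the additive group $(\Mm,+)$ with the subgroups $\Mm^{p^n}$. Each quotient $\Mm^{p^{n}}/\Mm^{p^{n+1}}$ is infinite, since it is a vector space over $\Mm^{p^{n+1}}$ of dimension $\ge 2$. The plan is the standard ict-pattern argument: for each $n$ choose a sequence $(b_{n,i})_{i<\omega}$ in $\Mm^{p^n}$ lying in distinct cosets of $\Mm^{p^{n+1}}$, and make these sequences mutually indiscernible. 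For any function $\eta:\omega\to\omega$, the element
\[
a_\eta = \sum_n b_{n,\eta(n)}
\]
is not available as a finite sum, so we use compactness to realize the type $\{x - \sum_{m<n} b_{m,\eta(m)} \in \Mm^{p^n} : n<\omega\}$. Then $x$ lies in the coset of $b_{n,\eta(n)}$ modulo $\Mm^{p^{n+1}}$ (relative to the earlier terms) and in no other coset among the $b_{n,i}$. This is an ict-pattern of infinite depth with formulas in a single variable, hence $\Mm$ is not strongly dependent, contradicting the hypothesis.

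We expect the main obstacle to be making this pattern genuinely witness the failure of strong dependence in the precise form stated in the introduction. Concretely, we must check that each sequence $(b_{n,i})_i$ is indiscernible over the earlier parameters yet fails to be indiscernible over the single element $a$. To arrange the required consistency we choose the $b_{n,i}$ as $t$-adic-style elements $c_i^{p^n}\cdot t^{p^n}$ with $c_i$ independent generic elements; the coset conditions are then definable from $t$. A fallback approach is to invoke known results that separably closed non-perfect fields have infinite weight types (they are stable but not superstable, and $\Mm^{p^\infty}$ gives infinite chains of forking). We also need to double-check the initial reduction that stable infinite fields are separably closed, which may require citing the paper's later facts rather than proving it directly.
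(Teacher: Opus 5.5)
\textbf{There is a genuine gap.} Your opening reduction assumes the hard part of the theorem. It is not a classical fact that infinite stable fields are separably closed. That is the stable fields conjecture, which is open in general, and for strongly stable fields it is exactly what this theorem establishes. The other half, that strongly stable fields are perfect, is already known (Fact~\ref{perfect}, Krupi\'nski--Pillay). So your proposal ends up handling only the part the paper cites as a black box.

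Your sketched justification does not close the gap. Fact~\ref{fact:ksw} rules out only Artin--Schreier extensions, i.e.\ cyclic extensions of degree $\characteristic(K)$. Nothing about definable orderings or valuations excludes a Kummer extension $K(\sqrt[\ell]{a})$ with $\ell \ne \characteristic(K)$, and ruling these out is where strong dependence must be used essentially. The paper does it as follows:
\begin{enumerate}
\item It passes to an $\ell$-special finite extension, so that $\ell \ne \characteristic(K)$, $K$ contains the $\ell$th roots of unity, and $(K^\times)^\ell \subsetneq K^\times$.
\item It takes $q$ to be the image of the generic type under $x \mapsto x^\ell$, and shows via Lemma~\ref{first} that $q$ has trivial additive stabilizer, so $\operatorname{CB}(a+q)=a$.
\item Using Lemma~\ref{zeroth}, it shows that independent generics $b_0,\ldots,b_N$ are algebraic over $(b_i^\ell - b_0^\ell)_{1\le i\le N}$.
\item From this it builds a definable finite-to-one map $K^{N+1}\to K^N$, contradicting Lemma~\ref{sdep}.
\end{enumerate}
Some argument of this kind is what your proposal is missing.

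Your treatment of the imperfect case is also flawed, though you could repair it by citing Fact~\ref{perfect}. A descending chain of definable subgroups of infinite index cannot by itself witness the failure of strong dependence. The structure $(V_1,+,V_2,V_3,\ldots)$ from the introduction is such a chain, and it is strongly stable.

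In your construction, the row-$n$ condition $x - \sum_{m<n} b_{m,\eta(m)} \in \Mm^{p^n}$ depends on the choices made in earlier rows. That gives a tree pattern, which witnesses non-superstability, not an ict-pattern. If you drop the earlier terms, the conditions $x - b_{n,\eta(n)} \in \Mm^{p^{n+1}}$ become inconsistent. Indeed, $x - b_{1,\eta(1)} \in \Mm^{p^2}$ with $b_{1,\eta(1)}\in \Mm^p$ forces $x \in \Mm^p$, hence $b_{0,\eta(0)} \in \Mm^p$ for every choice of $\eta(0)$. This contradicts the $b_{0,i}$ lying in distinct cosets of $\Mm^p$.

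What works instead is to use the field structure to get independent coordinates. If $t \notin K^p$, then $(x,y)\mapsto x^p + t y^p$ is an injective definable map $K\times K \to K$. Lemma~\ref{sdep} then applies immediately.
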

Theorem~\ref{thm:main} answers~\cite[Question 5.35(2)]{Shelah-Dependent} by Shelah. Our result generalizes a long chain of previous results, including the following:
\begin{itemize}
    \item Macintyre's theorem: $\omega$-stable fields are algebraically closed or finite \cite{Macintyre}.
    \item Cherlin and Shelah's theorem: superstable fields are algebraically closed or finite \cite{Cherlin-Shelah}.
    \item Krupi\'nski and Pillay's theorem: strongly stable fields are perfect, and if the generic type has weight 1, then the field is algebraically closed \cite{Krunpinski-Pillay}.
    \item Halevi and Palac\'in's theorem: dp-finite stable fields are algebraically closed or finite \cite{Halevi-Palacin}.  This is the case where there is a uniform finite bound on the weights of unary types.
    \item Shelah's theorem: strongly$^2$ stable fields are algebraically closed or finite \cite[Conclusion 5.39]{Shelah-Dependent}.
\end{itemize}
We follow the same general proof outline of many of these theorems: given a strongly stable infinite field $K$ which isn't algebraically closed, we pass to a finite extension which admits an Artin-Schreier extension or a Kummer extension, and then use some trickery to obtain a contradiction. The key ingredient in our argument is Lemma~\ref{sdep}, which isolates the essential mathematical content of the proof of~\cite[Proposition 2.3]{Krunpinski-Pillay}. It describes a configuration witnessing the failure of strong dependence, and the existence of a Kummer extension allows us to realize such a configuration.
\section{The proof}

Work in some highly saturated monster model $\Mm$.  We first recall three facts from the literature.
\begin{fact}[{\cite[Theorem 4.3]{Kaplan-Scanlon-Wagner}}] \label{fact:ksw}
    Suppose $\Mm$ is stable, or more generally NIP.  If $(K,+,\cdot)$ is an infinite definable field of characteristic $p>0$, then the Artin-Schreier map
    \begin{align*}
        f : K &\to K \\
        f(x) &= x^p - x
    \end{align*}
    is surjective, so $K$ has no Artin-Schreier extensions.
\end{fact}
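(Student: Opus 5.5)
The plan is to follow the Kaplan--Scanlon--Wagner argument, which has two parts: first establish the statement for stable theories, where the connected-component machinery is cleanest, and then indicate how the NIP case follows from Shelah's theorem on connected components.

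\textbf{Step 1: the additive group is connected.} Let $K$ be an infinite definable field in the stable structure $\Mm$. By the Baldwin--Saxl chain condition, $(K,+)$ has a smallest definable subgroup of finite index, the connected component $K^0$. Multiplication by any $a \in K^\times$ is a definable automorphism of $(K,+)$, so it maps $K^0$ onto itself. Hence $K^0$ is a nonzero ideal of the field $K$. Since $K$ is infinite, $K^0$ is nonzero, and therefore $K^0 = K$. This shows that $(K,+)$ has no proper definable subgroup of finite index.

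\textbf{Step 2: the image of the Artin--Schreier map has finite index.} The map $f(x) = x^p - x$ is additive in characteristic $p$ and definable. Its kernel is $\mathbb{F}_p$, which is finite. The image $f(K)$ is a definable subgroup of $(K,+)$. It remains to see that $f(K)$ has finite index.

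In the stable case we use ranks. Working with a finite-kernel definable homomorphism, the image $f(K)$ has the same stratified local ranks as $K$, because fibres are finite. A definable subgroup of full local rank for every relevant formula has finite index.

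In the NIP case, the Kaplan--Scanlon--Wagner argument instead uses Shelah's theorem that $K^{00}$ exists. The key point is that $f$ induces a surjection of $K/K^{00}$ onto $f(K)/f(K)^{00}$ with finite kernel. An alternative route goes through definable-amenability of the additive group.

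\textbf{Step 3: conclude surjectivity.} Combining Step 1 with finite index gives $f(K) = K$. Thus every $a \in K$ equals $x^p - x$ for some $x$, so $K$ has no Artin--Schreier extension.

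The main obstacle is Step 2 in the NIP setting. Showing that the image of a finite-to-one additive endomorphism has finite index requires a dimension or measure argument. In the stable case the rank argument handles this. In the NIP case the argument needs that $[K:f(K)]$ is bounded, since it is at most $|\ker f| = p$ by a counting argument over $K^{00}$, together with the fact that a type-definable bounded-index subgroup containing $K^{00}$ forces the definable $f(K)$ to have finite index. I would then appeal to Step 1, applied now to $K^{00}$ being an ideal, to obtain $f(K) = K$.
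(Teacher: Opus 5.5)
\textbf{There is a genuine gap in Step 2, which carries the whole theorem.} The paper gives no proof of this statement; it quotes Kaplan--Scanlon--Wagner, so I compare your proposal with their argument.

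Your outer reduction is fine: $(K,+)$ has no proper definable subgroup of finite index. This is not because a smallest such subgroup exists, since in a stable group $G^0$ is in general only type-definable, as in $(\Zz,+)$. The correct argument is that for definable $H$ of finite index, $\bigcap_{a\in K^\times}aH$ is a finite subintersection by Baldwin--Saxl. It is therefore a definable ideal of finite index, hence equal to $K$. So, writing $\wp(x)=x^p-x$, it suffices to show that $\wp(K)$ has finite index.

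That finite-index claim is the entire content of the theorem, and Step 2 does not prove it. The principle you use is that a definable endomorphism with finite kernel has an image with the same local ranks, hence of finite index. This is false in stable groups, even connected ones. Take the module $(V,+,T)$ with $V=\Qq^{(\omega)}$ and $T$ the shift. Then $V$ is divisible and hence connected, $T$ is injective, and $T(V)$ has infinite index. Local ranks are not transported along definable bijections, because the formula changes. For $\phi(x;y)$ saying $x-y\in T(V)$, the set $T(V)$ is a single $\phi$-class while $V$ splits into infinitely many. A $U$-rank version of your argument does work in the superstable case, but this example is stable and not superstable.

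The NIP paragraph has the same defect. There is no counting argument giving $[K:\wp(K)]\le|\ker\wp|=p$. For an endomorphism of an infinite group, kernel and cokernel sizes are unrelated, and for a general field $[K:\wp(K)]$ can be infinite; such a bound would already be most of the theorem. Moreover, as you note, $K^{00}$ is an ideal, so $K^{00}=K$ and ``counting over $K^{00}$'' has no content.

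The missing idea is to use the field structure and NIP together on the uniformly definable family of subgroups $a\wp(K)$, $a\in K^\times$.
\begin{itemize}
\item By Baldwin--Saxl there is $n$ such that for any $a_0,\ldots,a_n$, one of the $a_j$ can be omitted without changing $\bigcap_i a_i\wp(K)$.
\item Consider the algebraic group $G_{\bar a}=\{(t,x_0,\ldots,x_n): t=a_i(x_i^p-x_i)\text{ for all } i\}$. Its $K$-points project onto $\bigcap_i a_i\wp(K)$ with kernel $\mathbb{F}_p^{n+1}$.
\item Choose the $a_i$ algebraically independent over $\mathbb{F}_p$ and lying in the perfect subfield $\bigcap_m K^{p^m}$, which is infinite by saturation. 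Then $G_{\bar a}$ is a smooth connected one-dimensional unipotent group over a perfect subfield of $K$. Hence it is isomorphic to $\mathbb{G}_a$ over that subfield, and so over $K$.
\item Under these isomorphisms, the projection forgetting $x_j$ becomes a separable degree-$p$ additive polynomial with kernel in $K$, that is, a scaled Artin--Schreier map. This gives
\[
\Bigl[\bigcap_{i\ne j}a_i\wp(K):\bigcap_i a_i\wp(K)\Bigr]=[K:\wp(K)].
\]
\item Hence $\wp(K)\ne K$ contradicts Baldwin--Saxl.
\end{itemize}
Without some such algebraic input, Step 2 cannot be completed.
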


\begin{fact}[{\cite[Theorem 5.10]{poizat}}] \label{poizat}
    Suppose $\Mm$ is stable and $(K,+,\cdot)$ is an infinite definable field.  Then the multiplicative group $K^\times$ and additive group $(K,+)$ are both connected, and have the same generic type.
\end{fact}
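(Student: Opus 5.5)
The plan is to derive everything from two standard facts about a stable group $G$. First, the definable subsets of $G$ that are \emph{not} generic form an ideal, and a definable set is generic iff finitely many translates of it cover $G$. Second, $G$ is connected (has no proper definable subgroup of finite index) iff it has a unique generic type. I would prove the additive statement first, then transfer genericity from the additive to the multiplicative group using the fact that multiplication by a nonzero scalar is an additive automorphism.

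\emph{Step 1: $(K,+)$ is connected.} Let $G^0$ be the connected component of $(K,+)$, the intersection of all definable finite-index subgroups. For $a \in K^\times$, the map $x \mapsto ax$ is a definable automorphism of $(K,+)$. It therefore permutes the definable finite-index subgroups, so $aG^0 = G^0$. Thus $G^0$ is an ideal of the field $K$. It is nonzero, since it has bounded index in the infinite group $K$, so $G^0 = K$. Hence $(K,+)$ is connected and has a unique generic type $p$.

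\emph{Step 2: every multiplicative generic is the additive generic.} Let $q$ be any generic type of $K^\times$ and let $\phi \in q$. Multiplicative genericity gives $a_1,\dots,a_n \in K^\times$ with $K^\times \subseteq \bigcup_i a_i\phi(\Mm)$. Suppose $\phi$ were not additively generic. Since $x\mapsto a_ix$ is an additive automorphism, each $a_i\phi(\Mm)$ is also additively non-generic. As the non-generic sets form an ideal, the finite union $\bigcup_i a_i\phi(\Mm)$ would be non-generic, and hence so would its subset $K^\times = K\setminus\{0\}$. But $K^\times$ is additively generic, because $K^\times \cup (1+K^\times) = K$. This is a contradiction. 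So every formula in $q$ is additively generic, meaning $q$ is an additive generic type, and by Step 1 we get $q = p$.

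\emph{Step 3: conclusion.} Step 2 shows that $K^\times$ has exactly one generic type, namely $p$. By the characterization of connectedness via generic types, $K^\times$ is connected, and its generic type coincides with that of $(K,+)$.

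The main obstacle is bookkeeping rather than real difficulty. One must use the correct notion of genericity and rely on the stable-group fact that non-generic sets form an ideal, together with the equivalence between connectedness and uniqueness of the generic. Once those are granted, the argument is short.
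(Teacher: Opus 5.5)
The paper gives no proof of this statement; it is quoted from Poizat, so there is no internal argument to compare against. Your proof is correct. In Step~1 the inputs are that $G^0$ has bounded index (Baldwin--Saxl) and is preserved by the definable automorphisms $x \mapsto ax$, which makes it a nonzero ideal. In Step~2 the covering $K^\times \cup (1+K^\times) = K$, together with the ideal of non-generic sets, correctly shows that every multiplicatively generic formula is additively generic. Step~3 additionally needs only the standard existence of generic types in $K^\times$.

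The usual textbook argument for the multiplicative half goes through stabilizers instead. Since $x\mapsto ax$ is an additive automorphism, $a\cdot p$ is again additively generic, so $a\cdot p = p$ for all $a\in K^\times$. Thus the multiplicative stabilizer of $p$ is all of $K^\times$, which forces $p$ to be multiplicatively generic and gives $(K^\times)^0 = \Stab_{K^\times}(p) = K^\times$.

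That route uses more stabilizer machinery: a type whose stabilizer has bounded index is generic, and a generic type has stabilizer $G^0$. In return it yields immediately the invariance $c\cdot p = p$, which is the form in which the paper later uses the Fact (e.g.\ $c^\ell\cdot q = q$ in the proof that $\Stab(q)$ is trivial). Your route stays at the level of definable sets and finite covers and avoids stabilizers entirely. It also gives the slightly stronger formula-level statement that every multiplicatively generic definable subset of $K^\times$ is additively generic. The invariance $c\cdot p = p$ is then recovered afterwards from the connectedness of $K^\times$.
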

\begin{fact}[{\cite[Proposition~2.3]{Krunpinski-Pillay}}] \label{perfect}
If $\Mm$ is strongly stable, any definable field $(K,+,\cdot)$ is perfect.
\end{fact}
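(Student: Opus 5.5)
Throughout, I would write $K^p=\{x^p : x\in K\}$ for the image of Frobenius. I would argue by contradiction. Suppose $(K,+,\cdot)$ is a definable field of characteristic $p>0$ which is not perfect, and fix $t \in K\setminus K^p$. Naming $t$ as a constant does not affect strong dependence, so $t$ may be used as a parameter. The idea is to use $t$ to build infinitely many definable additive coordinate functions $K\to K$ that are \emph{jointly independent}, meaning the map to any finite product of them is surjective. From these one gets an ict-pattern of infinite depth, i.e.\ the configuration in the definition of strong dependence.

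A plain descending chain $K \supsetneq K^p \supsetneq K^{p^2}\supsetneq\cdots$ of infinite-index subgroups is not enough, as the $\Qq$-vector space example in the introduction shows. The construction therefore has to produce independent coordinates rather than nested subgroups.

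\textbf{Step 1: the basis.} Since $t\notin K^p$ and $t^p\in K^p$, the polynomial $X^p-t^p$ is irreducible over $K^p$ and $[K^p(t):K^p]=p$. I would check that in fact $K=K^p(t)$. This should hold because $[K:K^p]=[K^{1/p}:K]$, and in a stable field this degree is finite. If it is not, I would replace $K$ with a suitable definable structure, or work with the definable subspace spanned by $1,t,\dots,t^{p-1}$. The cleaner route is to avoid the issue altogether: choose $p$-independent elements and use only the $K^p$-linear independence of $1,t,\dots,t^{p-1}$ together with a definable $K^p$-linear projection onto the complement.

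Assuming $K=\bigoplus_{i<p} t^iK^p$, every $x\in K$ has a unique expression $x=\sum_{i<p} t^i x_i^p$ with $x_i\in K$. Since Frobenius is injective, the maps $x\mapsto x_i$ are well defined. They are also definable and additive.

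\textbf{Step 2: the coordinates.} Let $g(x)=x_0$ and $c(x)=x_1$, and set $c_n = c\circ g^{\,n-1}$ for $n\ge 1$. Each $c_n: K\to K$ is definable and additive.

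The key claim is that for every $n$ the map $x\mapsto (c_1(x),\dots,c_n(x))$ from $K$ to $K^n$ is surjective. I would prove this by induction on $n$. Given targets $y_1,\dots,y_n$, the inductive hypothesis gives $z$ with $c_k(z)=y_{k+1}$ for $1\le k\le n-1$. Then take $x = z^p + t\,y_1^p$. We have $g(x)=z$ and $c(x)=y_1$, so $x$ realizes all $n$ targets.

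\textbf{Step 3: the pattern.} By saturation and Ramsey/compactness, choose mutually indiscernible sequences $I_n=(b_{n,j})_{j<\omega}$ of pairwise distinct elements of $K$. By Step 2 and compactness, there is a single $a\in K$ with $c_n(a)=b_{n,0}$ for all $n$. For each $n$ we then have $c_n(a)=b_{n,0}\neq b_{n,1}=c_n$-value not attained, i.e.\ $c_n(a)\ne b_{n,1}$. So the formula $c_n(a)=y$ distinguishes $b_{n,0}$ from $b_{n,1}$, and $I_n$ is not indiscernible over $a$. This holds for every $n$, contradicting strong dependence. Hence $K$ is perfect.

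The main obstacle is Step 1: guaranteeing a definable decomposition $K=\bigoplus_{i<p}t^iK^p$, or at least a definable $K^p$-linear retraction exhibiting $t$ as an independent direction. My first attempt would be to show that $[K:K^p]$ is finite, so that a finite $p$-basis containing $t$ exists. I would then replace the single element $t$ by the full $p$-basis, with $g$ and $c$ being coefficient maps with respect to the monomial basis. The recursion and the surjectivity argument go through unchanged.
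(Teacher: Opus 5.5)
Your Steps 2 and 3 are sound. Step 1, however, does not go through as written, so $g$ and $c$ are never actually defined on all of $K$.

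\textbf{The gap in Step 1.} Your claim that $[K:K^p]$ is finite in a stable field is false. Separably closed fields of infinite degree of imperfection are stable, and there $[K:K^p]=\infty$. So finiteness would have to come from strong dependence, and you give no argument for that. Your fallback, a definable $K^p$-linear projection onto a complement of $K^p+tK^p$, is also unjustified. You give no construction, and there is no general reason such a definable complement exists.

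\textbf{The fix.} The obstacle disappears because you never need a decomposition of $K$. Since $t\notin K^p$, the definable map $F(x,y)=x^p+ty^p$ from $K^2$ to $K$ is injective. Indeed, if $x^p+ty^p=x'^p+ty'^p$ with $y\neq y'$, then $t=\bigl((x-x')/(y'-y)\bigr)^p\in K^p$.

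Define $g(x)$ and $c(x)$ to be the two coordinates of $F^{-1}(x)$ when $x\in F(K^2)$, and $0$ otherwise. These maps are definable over $t$. They are no longer additive on all of $K$, but additivity is never used.

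Your induction in Step 2 only evaluates $g$ and $c$ at points of the form $z^p+ty_1^p$, where $g(x)=z$ and $c(x)=y_1$ hold by injectivity. So surjectivity of $(c_1,\ldots,c_n)$ goes through verbatim, and Step 3 then finishes the proof.

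\textbf{Relation to the paper.} With this repair, your argument is essentially the paper's Lemma~\ref{sdep} applied to the injective map $F$ (up to swapping the coordinates), with $X=Y=K$. The paper identifies that lemma as the content of Krupi\'nski and Pillay's proof of the cited proposition. Your element $a$ plays the role of $b_0$ there. The values $c_n(a)=b_{n,0}$ play the role of the column-zero entries recovered algebraically from $b_0$.
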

Krupi\'ski and Pillay's proof of \cite[Proposition~2.3]{Krunpinski-Pillay} can be generalized to give the following:
\begin{lemma} \label{sdep}
  Let $X, Y$ be definable sets, with $X$ infinite.  Let $f : X \times
  Y \to Y$ be a definable finite-to-one map.  Then the theory isn't
  strongly dependent.
\end{lemma}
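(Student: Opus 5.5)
Let me recall how the proof of the perfectness of strongly stable fields goes. There the Frobenius map $x \mapsto x^p$ is used to build a map $K \times K \to K$, $(a,b) \mapsto a^p + b$... not quite. The idea is to iterate $f$: for $a_1, a_2, \ldots \in X$ and $y \in Y$, set $y_0 = y$ and $y_{n} = f(a_n, y_{n-1})$. We want countably many mutually indiscernible sequences $I_1, I_2, \ldots$ (each a sequence of elements of $X$) and a single finite tuple $c \in Y$ that breaks the indiscernibility of every $I_j$.

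The construction runs backwards from a point. Fix $X$ infinite and work in the monster model. Choose mutually indiscernible sequences $I_j = (a_{j,i} : i \in \omega)$ of distinct elements of $X$; these exist by Ramsey and compactness since $X$ is infinite. The goal is a single element $c \in Y$ such that for each $j$ the elements of $I_j$ behave differently over $c$. First handle finitely many $j$ by compactness, then let $n \to \infty$. For fixed $n$ and a choice of indices $i_1,\dots,i_n$, I would like $c$ such that $c$ "encodes" $(i_1,\dots,i_n)$: for instance, $c$ ranges over the set of $y$ with $f(a_{n,i_n}, f(\dots f(a_{1,i_1}, y_0)\dots))$ equal to a fixed $d$. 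Concretely, fix $d \in Y$ and look at the formula $\psi_n(x_1,\dots,x_n; z)$ saying "there is a chain $z = z_0, z_1, \dots, z_n$ in $Y$ with $f(x_k, z_k) = z_{k-1}$, and $z_n = $ something". Finite-to-one means the fiber of $f$ over any point is finite, so for fixed $z$ only finitely many $(x, y)$ satisfy $f(x,y) = z$. In particular, for fixed $z$ only finitely many $x \in X$ admit a preimage. This is the source of non-indiscernibility.

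The plan is then as follows. Since $f$ is finite-to-one and $X$ is infinite, $f$ cannot be "small": for fixed $y$, $x \mapsto f(x,y)$ is finite-to-one, so its image is infinite. Start from an arbitrary $y_0$ and define $c_n = f(a_{n,0}, f(a_{n-1,0}, \dots f(a_{1,0}, y_0)))$, using the $0$th element of each sequence. Take the finite tuple to be $c_n$, for a fixed $n$ (by compactness it suffices to do this uniformly for arbitrary $n$ with the same bound $N$ on fiber sizes). Then define $\theta_k(x; c_n)$: "there exist $z_k,\dots,z_n$ with $z_n = c_n$, $f(a_{m,0}, z_{m-1}) = z_m$ for $m>k$..." This uses parameters from the other sequences, which is allowed in the mutual indiscernibility setting only if we phrase it correctly: sequence $I_k$ must fail indiscernibility over $c$ and the other sequences is fine only in one variant of the definition. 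To match the paper's definition (failure over $a$ alone), I would instead use the standard equivalent formulation via arrays (the ict-pattern / "inp-like" criterion of \cite[Chapter 4]{NIPguide}): it suffices to find formulas $\phi_k(x; y_k)$ and an array $(b_{k,i})$ such that for every $\eta : \omega \to \omega$ there is $c$ with $\phi_k(c; b_{k,i})$ iff $i = \eta(k)$. Here take $b_{k,i} = a_{k,i}$ and $c$ the sequence-chain point, with $\phi_k$ saying that the $k$-th step of the chain ending at... Again this needs a bounded-length tuple.

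The key difficulty is encoding the choice at level $k$ into a single element of $Y$ of bounded arity. I expect to resolve it with the formula $\phi_k(y; x_1\dots x_k)$: "$y \in f(x_k, f(x_{k-1},\dots f(x_1, Y)))$", i.e., $y$ lies in the image of the $k$-fold composite. Finite-to-one-ness with bound $N$ means a given $y$ has at most $N^k$ preimage paths, so it can lie in the image for only boundedly many choices of $x_k$ given the earlier ones; meanwhile any path is realizable. Getting the ict-pattern then reduces to choosing the array so that these finite sets are avoided, which I expect to be the main obstacle and would handle by a Ramsey/compactness argument, yielding infinite burden and hence failure of strong dependence.
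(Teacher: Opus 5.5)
There is a genuine gap. You have the right ingredient (iterate $f$; finite fibres make the inputs algebraic over the output), but you never produce the single finite tuple the definition requires. The fallback you switch to is also not carried out.

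\emph{The composition order.} Your tuple $c_n = f(a_{n,0}, f(a_{n-1,0}, \ldots f(a_{1,0}, y_0)))$ is composed in the wrong order for a limit argument. The input $a_{n,0}$ is outermost, so $a_{1,0}$ sits $n$ levels below $c_n$ and the formula linking them changes with $n$. The finite stages therefore do not extend one another, and compactness yields no single $c$ that handles all the $I_k$ at once.

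The paper composes the other way. In your indexing, it asks for $b_0, b_1, b_2, \ldots \in Y$ with $b_{k-1} = f(a_{k,0}, b_k)$ for all $k \geq 1$. Each finite fragment $k \le n$ is satisfiable: take $b_n$ arbitrary and compute $b_{n-1}, \ldots, b_0$ by applying $f$. The fragments are nested, so saturation gives the whole infinite chain.

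\emph{The parameter worry.} Your concern that witnessing non-indiscernibility of $I_k$ needs parameters from the other sequences is misplaced. Finite-to-one gives $a_{k,0}, b_k \in \acl(b_{k-1})$, so by transitivity of $\acl$ every $a_{k,0}$ lies in $\acl(b_0)$. Since $I_k$ consists of pairwise distinct elements and one of them is algebraic over $b_0$, $I_k$ is not indiscernible over $b_0$ alone. This is exactly the definition of failure of strong dependence, with the finite tuple $b_0$. Reversing your composition order and invoking transitivity of $\acl$ is the whole of the paper's proof.

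\emph{The ict-pattern route.} The route you substitute does not work as stated, for three reasons:
\begin{itemize}
\item Your $\phi_k(y; x_1 \ldots x_k)$ takes as parameters entries from rows $1, \ldots, k$ chosen along the path. In an ict-pattern, the parameter of $\phi_k$ must be an entry $b_{k,i}$ of row $k$ alone, with one realization $c_\eta$ per path $\eta$ against a fixed array.
\item With $x_k$ outermost, $\phi_{k+1}$ does not imply $\phi_k$, so the conditions along a path do not even cohere.
\item The step you yourself call the main obstacle, choosing the array so that the finite exceptional sets are avoided, is left open.
\end{itemize}
As it stands, the proposal does not establish the lemma.
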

\begin{proof}
  Naming parameters, we may assume $X,Y,f$ are 0-definable.
  
  Let $\{a_{i,j}\}_{i,j \in \omega}$ be an $\omega \times \omega$
  mutually indiscernible array of pairwise distinct elements of $X$.  For any $n$, we can build a sequence $b_0, b_1, b_2, \ldots, b_n \in Y$ such that
  \begin{equation*}
    b_i = f(a_{0,i},b_{i+1}) \text{ for all } i < n. \tag{$\ast$}
  \end{equation*}
  Indeed, take $b_n \in Y$ arbitrary and use ($\ast$) to recursively
  choose $b_{n-1}, b_{n-2}, \ldots, b_1, b_0$.
  
  By saturation, there is an infinite sequence $b_0, b_1, b_2, \ldots \in Y$
  such that
  \begin{equation*}
    b_i = f(a_{0,i},b_{i+1}) \text{ for all } i < \omega.
  \end{equation*}
  Since $f$ is finite-to-one, we have $a_{0,i},b_{i+1} \in \acl(b_i)$
  for each $i$.  Then by induction we get $\acl(b_0) \supseteq
  \{b_0,a_{0,0},b_1,a_{0,1},b_2,a_{0,2},\ldots\}$.  In particular,
  every element of column zero of $\{a_{i,j}\}_{i,j \in \omega}$ is
  algebraic over $b_0$.  Then no row of $\{a_{i,j}\}_{i,j \in \omega}$
  is $b_0$-indiscernible, so the theory isn't strongly dependent.
\end{proof}
\begin{lemma} \label{zeroth}
  Assume the theory is stable.  Let $(G,+)$ be a connected 0-definable
  group, with generic type $p$.  Let $a,c_1,\ldots,c_n$ be elements of
  $G$, and $b$ be some other element, such that
  \begin{itemize}
  \item $a$ is generic over $c_1,\ldots,c_n,b$, i.e., $a$ realizes
    $p|c_1c_2 \cdots c_n b$.
  \item $b \in \acl(a+c_1,a+c_2,\ldots,a+c_n)$.
  \end{itemize}
  Then $b \in \acl(c_2-c_1,c_3-c_1,c_4-c_1,\ldots,c_n-c_1)$.
\end{lemma}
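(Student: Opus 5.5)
The plan is a change of variables that moves everything onto a single generic element, followed by a standard forking argument. Write $d = (c_2-c_1, c_3-c_1, \ldots, c_n-c_1)$ and $e = a + c_1$. Then $a + c_i = e + (c_i - c_1)$ for every $i$. So the tuple $(a+c_1,\ldots,a+c_n)$ is interdefinable over $d$ with $e$, and the hypothesis $b \in \acl(a+c_1,\ldots,a+c_n)$ becomes $b \in \acl(e,d)$.

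Next I will show that $e$ realizes $p|db$. Note that $d \in \dcl(c_1,\ldots,c_n)$, so the hypothesis gives that $a$ realizes $p|c_1 d b$. Since $G$ is connected, its generic type $p$ is the unique generic type and is translation invariant. Translating a realization of $p|A$ by an element of $\dcl(A)$ gives another realization of $p|A$. Applying this with $A = c_1 d b$ and the translation by $c_1$, we get that $e = a + c_1$ realizes $p|c_1 d b$, and in particular $p|db$.

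Now $p|db$ is the nonforking extension of $p|d$, because generic types over any set are nonforking extensions of the global generic $p$. Hence $e \ind_d b$, and by symmetry of forking in stable theories, $b \ind_d e$. Combined with $b \in \acl(d,e)$ from the first step, this forces $b \in \acl(d)$. The reason is the standard fact that if $b \in \acl(de)$ and $b \ind_d e$, then $b \in \acl(d)$: otherwise $\tp(b/de)$ is algebraic while $\tp(b/d)$ is not, so $\tp(b/de)$ forks over $d$. This yields $b \in \acl(c_2-c_1,\ldots,c_n-c_1)$.

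The only step needing care is the second one: checking that the translate $a+c_1$ is still generic over the full parameter set $db$, and that genericity over $db$ amounts to independence from $b$ over $d$. Both are standard consequences of connectedness and stability, namely uniqueness and translation invariance of the generic type, together with the characterization of generics as nonforking extensions. I would cite these from the general theory of stable groups, e.g.\ \cite{poizat}.
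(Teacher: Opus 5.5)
Your proof is correct and is essentially the paper's argument. The paper likewise sets $a' = a+c_1$, notes that $a'$ is generic over $\bar{c}b$, obtains $a' \ind_u \bar{c}b$ by base monotonicity, and concludes from $b \in \acl(a',u)$ that $b \ind_u b$, hence $b \in \acl(u)$; your route through symmetry and the fact that an algebraic type forks is the same idea written out in slightly more detail.
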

The intuition here is that $(c_2-c_1,c_3-c_1,\ldots,c_n-c_1)$ is the canonical base of $\tp(a+c_1,a+c_2,\ldots,a+c_n/c_1,\ldots,c_n,b)$.
\begin{proof}
  Let $a' = a+c_1$.  Then $a'$ is generic over $c_1,\ldots,c_n,b$, so
  \begin{equation*}
    a' \ind_\varnothing \bar{c}b,
  \end{equation*}
  where $\bar{c}=(c_1,\ldots,c_n)$.  Let $u=(c_2-c_1,c_3-c_1,\ldots,c_n-c_1) \in \dcl(\bar{c})$.  By base monotonicity,
  \begin{equation*}
    a' \ind_u \bar{c}b.  \tag{$\dag$}
  \end{equation*}
  On the other hand
  \begin{align*}
    b &\in \acl(a+c_1,a+c_2,\ldots,a+c_n) \\
    &= \acl(a', a'+(c_2-c_1), a'+(c_3-c_1), \ldots, a'+(c_n-c_1)) \\
    &= \acl(a',c_2-c_1,\ldots,c_n-c_1) = \acl(a',u).
  \end{align*}
  By ($\dag$), $b \ind_u b$, so $b \in \acl(u)$.
\end{proof}
\begin{definition}
  Let $\ell$ be a prime.  A field $K$ is \emph{$\ell$-special} if $K$
  has a degree-$\ell$ Galois extension, and no prime $p < \ell$
  divides the degree of any Galois extension of $K$.
\end{definition}
\begin{lemma} \label{lem:to-special}
  Let $K$ be a field that isn't separably closed.  Then there is a
  prime $\ell$ and a finite extension $L/K$ which is $\ell$-special.
\end{lemma}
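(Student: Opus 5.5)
Since $K$ is not separably closed, we can fix a finite Galois extension $M/K$ of degree $>1$, taken inside a separable closure $K^{sep}$. The natural strategy is to go through the absolute Galois group $G = \Gal(K^{sep}/K)$, which is a nontrivial profinite group. Write $H = \Gal(K^{sep}/M)$, so $H$ is a proper open subgroup of $G$.

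Let $\ell$ be the smallest prime dividing the index $[G:H] = [M:K]$. We will choose $L$ to be the fixed field of a pro-$\ell$ Sylow subgroup of $G$, or more practically a suitable finite-level version of it.

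The cleanest route uses Sylow theory for profinite groups.
\begin{itemize}
\item Let $P$ be a Sylow pro-$\ell$ subgroup of $G$. Then $P \cap H$ is a Sylow pro-$\ell$ subgroup of $H$.
\item Because $\ell$ divides $[G:H]$, the index $[P:P\cap H]$ equals the $\ell$-part of $[G:H]$, which is $>1$. Hence $P \cap H$ is a proper open subgroup of $P$.
\item A proper open subgroup of a pro-$\ell$ group is contained in a proper open normal subgroup of index $\ell$. This holds because finite $\ell$-groups are nilpotent, so maximal subgroups are normal of index $\ell$.
\end{itemize}
So the fixed field $K^P$ has a Galois extension of degree $\ell$. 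Moreover every finite Galois extension of $K^P$ has $\ell$-power degree, so no prime $p<\ell$ divides any such degree.

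The only problem is that $K^P$ is in general only an algebraic, not finite, extension of $K$. To fix this, we work with the closed subgroup $U = PH$. Since $H$ is open, $U$ is open.

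We claim $L = K^{U}$ works, where $K^U$ denotes the fixed field of $U$. Here $[G:U]$ is finite, so $L/K$ is a finite extension. Also $\Gal(K^{sep}/L) = U$, so the Galois extensions of $L$ correspond to open normal subgroups of $U$ (normal in $U$), and their degrees are the indices of those subgroups.

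Unfortunately, $U$ need not be pro-$\ell$, since $H$ may have order divisible by small primes. So we would really like to choose $\ell$ as the smallest prime dividing the supernatural order of $G$, not of $[G:H]$.

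With that choice of $\ell$:
\begin{itemize}
\item Take $P$ to be a Sylow pro-$\ell$ subgroup of $G$. It is nontrivial because $\ell$ divides $|G|$.
\item Since $P \neq 1$ is profinite, it has a proper open subgroup, and hence a normal open subgroup $N$ of index $\ell$.
\item Consider the open subgroup $NW$ of $P$, and the corresponding finite-level subgroup $V = P\cdot W'$, where $W'$ is an open normal subgroup of $G$ with $W' \cap P \subseteq N$.
\end{itemize}
This is the step I expect to be the main obstacle: making the descent to a finite extension compatible with the condition on all Galois extensions. The condition that no prime $p<\ell$ divides the degree of any Galois extension of $L$ is automatic once $\ell$ is minimal among primes dividing $|G|$. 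Indeed, every Galois extension of $L$ has degree dividing $|\Gal(K^{sep}/L)|$, which divides $|G|$ as supernatural numbers.

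So the genuine task is only the following. We need a finite extension $L/K$ with $\ell$ dividing $|\Gal(K^{sep}/L)|$ such that $L$ has a degree-$\ell$ Galois extension. For this, take $L$ to be the fixed field of an open subgroup $V \supseteq P$ chosen as follows.
\begin{itemize}
\item By compactness, $P = \bigcap_i V_i$ over the open subgroups $V_i \supseteq P$.
\item Then $N V_j \cap V_i$ arguments show that for some such open $V$, the subgroup $N$ extends to an index-$\ell$ normal open subgroup of $V$. Equivalently, the maximal pro-$\ell$ quotient of $V$ is nontrivial.
\end{itemize}
To see this, I would first try the following argument. Suppose the maximal pro-$\ell$ quotient of every $V_i$ is trivial. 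Passing to the inverse limit then contradicts the nontriviality of $P = \varprojlim$ of the images. Concretely, $\mathrm{Hom}(P,\mathbb{Z}/\ell) = \varinjlim_i \mathrm{Hom}(V_i,\mathbb{Z}/\ell)$, because $P = \bigcap V_i$ and continuous homomorphisms to a finite group factor through some open $V_i$. This yields $V$, and the Galois extension is the fixed field of the kernel.
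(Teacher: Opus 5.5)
Your argument is correct, but it takes a heavier route than the paper. Without the abandoned first attempt and the garbled intermediate lines (``$NW$ of $P$'', ``$NV_j\cap V_i$ arguments''), your proof runs as follows. Choose $\ell$ minimal among primes dividing the supernatural order of $G=\Gal(K^{sep}/K)$. Take a Sylow pro-$\ell$ subgroup $P\neq 1$ and an open $N\trianglelefteq P$ of index $\ell$. Pick an open normal $W'\trianglelefteq G$ with $W'\cap P\subseteq N$, and set $V=PW'$. Then $V\to V/W'\cong P/(P\cap W')\to P/N\cong\Zz/\ell$ is a continuous surjection. So $L=K^{V}$ is finite over $K$ and has a degree-$\ell$ Galois extension. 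The small-primes condition holds because $|V|$ divides $|G|$. Your final $\mathrm{Hom}$/direct-limit sentence is just this extension step, and ``factor through some open $V_i$'' should read ``extend to some open $V_i$''.

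The paper avoids profinite Sylow theory and the extension of characters by working in a single finite quotient. It takes a finite Galois $F/K$ with $\ell\mid[F:K]$, uses Cauchy's theorem to get $C\le\Gal(F/K)$ with $C\cong\Zz/\ell$, and sets $L=F^C$, so that $F/L$ is itself the degree-$\ell$ Galois extension. The small-primes condition passes to $L$ just as in your argument: a Galois extension of $L$ sits inside its Galois closure over $K$, whose degree it divides.

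Your $V$ is exactly the preimage of the Sylow $\ell$-subgroup $PW'/W'$ of the finite group $G/W'$. So your construction is the same finite argument, with ``a Sylow subgroup and an index-$\ell$ normal subgroup of it'' in place of ``a subgroup of order $\ell$''. What your version gains is that $[L:K]=[G:V]$ is prime to $\ell$. The paper's $L$ need not have this, since $[F^C:K]=[F:K]/\ell$, but the lemma does not need it.
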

\begin{proof}
  This is well-known: take $\ell$ the smallest prime that divides the
  degree of any Galois extension of $K$.  Take $F/K$ a Galois extension
  with $\ell \mid [F : K]$.  Take $C \le \Gal(F/K)$ with $C \cong
  \Zz/\ell$.  Take $L$ the fixed field of $C$.
\end{proof}
We now turn to the proof of our main theorem
\begin{theorem} \label{thm:main-copy}
    Let $K$ be an infinite definable field in a strongly stable theory.  Then $K$ is algebraically closed.
\end{theorem}
The remainder of this section is devoted to the proof.  By Fact~\ref{perfect}, it suffices to show that $K$ is separably closed.  Suppose for the sake of contradiction that $K$ is not separably closed.  By Lemma~\ref{lem:to-special}, there is a prime $\ell$ and a finite extension $L/K$ which is $\ell$-special.  The field $L$ is still definable, and not separably closed.  Replacing $K$ with $L$, we may assume that $K$ is $\ell$-special.
\begin{remark}
We now have an infinite, strongly stable field $K$ that is $\ell$-special for some prime $\ell$.
We make several observations about this setting:
  \begin{enumerate}
  \item $K$ is perfect (Fact~\ref{perfect}).
  \item If $\ell = \characteristic(K)$, then the degree-$\ell$
    extension of $K$ is an Artin-Schreier extension, contradicting Fact~\ref{fact:ksw}.  Thus $\characteristic(K) \ne \ell$.
  \item If $P(x) \in K[x]$ has $\deg P < \ell$, then $P$ splits.
    Indeed, if $Q$ is any irreducible factor of $P$, then $Q$ is
    separable by (1), so $K$ has a finite separable extension of
    degree $\deg Q < \ell$, implying $\deg Q = 1$ because $K$ is
    $\ell$-special.
  \item Let $c \in K$ be arbitrary.  The polynomial $P(x) = (x+1)^\ell
    - x^\ell - c$ has degree $\ell-1>0$ (since $\ell \ne
    \characteristic(K)$), so it has a root: there is some $x$ with $(x+1)^\ell-x^\ell = c$.  Therefore \[K =
    \{(x+1)^\ell - x^\ell : x \in K\}.\]
  \item Taking $P(x) = x^{\ell-1} + x^{\ell-2} + \cdots + x + 1$ in (3) above, we
    see that $K$ contains the $\ell$th roots of unity.
  \item Then Kummer theory shows that the degree-$\ell$ Galois extension of $K$ has the form $K(\sqrt[\ell]{a})$ for some $a \in K$.
    In particular, the $\ell$th power map on $K$ is non-surjective:
    $(K^\times)^\ell \subsetneq K^\times$.  This is all standard.
  \end{enumerate}
\end{remark}
\begin{lemma} \label{first}
  Let $G$ be an additive subgroup of $K$, such that $x^\ell G
  \subseteq G$ for any $x \in K$.  Then $G = \{0\}$ or $G = K$.
\end{lemma}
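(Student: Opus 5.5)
The plan is to show directly that if $G$ contains a nonzero element $g$, then $G \supseteq Kg = K$. The argument is purely algebraic: it uses only that $G$ is an additive group closed under multiplication by $\ell$th powers, together with observation (4) of the preceding remark. No model-theoretic input is needed, and in particular $G$ need not be definable.

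Suppose $G \neq \{0\}$ and fix $g \in G$ with $g \neq 0$. Let $c \in K$ be arbitrary. By observation (4) of the remark, which uses $\ell \ne \characteristic(K)$ and the splitting of polynomials of degree $< \ell$, there is some $x \in K$ with $c = (x+1)^\ell - x^\ell$. The hypothesis $x^\ell G \subseteq G$, applied once with $x+1$ and once with $x$, gives $(x+1)^\ell g \in G$ and $x^\ell g \in G$. Since $G$ is an additive subgroup, their difference is also in $G$:
\begin{equation*}
  cg = (x+1)^\ell g - x^\ell g \in G .
\end{equation*}
As $c$ was arbitrary, $Kg \subseteq G$. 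Because $g \neq 0$ and $K$ is a field, $Kg = K$, so $G = K$.

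There is no real obstacle here. The only substantive ingredient is the identity $K = \{(x+1)^\ell - x^\ell : x \in K\}$, and that was already established in the remark. The one point to be careful about is the case $x = 0$ or $x+1 = 0$. In that case one of the two terms is just $0 \cdot g$ or $1 \cdot g$, which lies in $G$ trivially, so the argument goes through uniformly.
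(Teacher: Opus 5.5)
Your proof is correct and is essentially the paper's argument: you take a nonzero element of $G$, observe that $((x+1)^\ell - x^\ell)g \in G$ for every $x$, and use observation (4) to conclude $G = K$. Your separate treatment of $x = 0$ or $x = -1$ is unnecessary, since the hypothesis $x^\ell G \subseteq G$ already applies to every $x \in K$.
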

\begin{proof}
  Suppose $u \in G$ is non-zero.  For any $x \in K$, the group $G$
  contains $(x+1)^\ell u$ and $x^\ell u$, as well as their difference
  $((x+1)^\ell-x^\ell)u$.  By point (4) above, $((x+1)^\ell-x^\ell)u$
  takes every value in $K$ as $x$ ranges over $K$, and so $G=K$.
\end{proof}
Since $K$ is an infinite stable field, the additive and multiplicative
groups are connected, and have the same generic type $p$ (Fact~\ref{poizat}).  Let $q$ be
the pushforward of $p$ under the $\ell$th power map.  (Here, $p$ and
$q$ are 0-definable global types.)  Since $K^\times$ is connected, the proper
subgroup $(K^\times)^\ell$ is non-generic, so $q$ is non-generic.  Let $\Stab(q)$ denote the additive stabilizer of $q$, that is, the stabilizer of $q$ in the additive group $(K,+)$.
\begin{lemma}
$\Stab(q)$ is trivial.
\end{lemma}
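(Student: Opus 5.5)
Let $H = \Stab(q)$, the additive stabilizer of $q$. The plan is to show that $H$ satisfies the hypothesis of Lemma~\ref{first}, namely $x^\ell H \subseteq H$ for all $x \in K$, and then to rule out $H = K$.

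\textbf{Step 1: $H$ is a subgroup.} This holds for the stabilizer of any global type in a stable group. Since $q$ is $0$-definable, $H$ is type-definable over $\varnothing$; in stable theories it is in fact an intersection of definable groups. We only need that it is a subgroup of $(K,+)$.

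\textbf{Step 2: closure under multiplication by $\ell$th powers.} The key observation is that $q$ is invariant under multiplication by nonzero $\ell$th powers. Let $x \in K^\times$ and let $a \models p|x$. Since $p$ is also the generic of the connected group $K^\times$ (Fact~\ref{poizat}), $xa \models p|x$. Hence $(xa)^\ell = x^\ell a^\ell \models q|x$, so $x^\ell q = q$ as global types. Now take $h \in H$, so that $h + q = q$. Multiplying by $x^\ell$ gives
\[x^\ell h + x^\ell q = x^\ell q,\]
that is, $x^\ell h + q = q$, so $x^\ell h \in H$. The case $x = 0$ is trivial. Some care is needed to make "multiplying a global type by a scalar" precise: use realizations over a model containing $x$ and $h$, and use definability of $q$ to pass to the global type.

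\textbf{Step 3: conclude.} By Lemma~\ref{first}, $H = \{0\}$ or $H = K$. Suppose $H = K$. Then $q$ is invariant under all additive translations. In a stable connected group, a global type whose stabilizer is the whole group is generic: its stabilizer has bounded index, and a type is generic iff its stabilizer has bounded index. This would make $q = p$, contradicting the non-genericity of $q$ established above. Therefore $H = \{0\}$.

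The main obstacle I expect is the bookkeeping in Step 2: checking that $x^\ell q = q$ and that translation commutes correctly with scaling at the level of global types. Both reduce to routine realization arguments using the $0$-definability of $p$ and $q$.
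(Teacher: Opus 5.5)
Your proof is correct and follows the paper's argument step for step: you show $x^\ell \cdot \Stab(q) \subseteq \Stab(q)$ using $x^\ell q = q$, apply Lemma~\ref{first}, and rule out $\Stab(q)=K$ because $q$ is non-generic. Your justification of $x^\ell q = q$, via uniqueness of the generic type of the connected group $K^\times$, fills in a step the paper simply asserts.
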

\begin{proof}
  Let $G$ be the additive stabilizer.  Then $G$ is a subgroup of
  $(K,+)$, type-definable over $\varnothing$.  If $c \in K^\times$ and
  $a \in \Stab(q)$, then $c^\ell \cdot q = q$, so
  \begin{equation*}
    c^\ell \cdot a + q = c^\ell \cdot a + c^\ell \cdot q = c^\ell \cdot (a + q) = c^\ell \cdot q = q,
  \end{equation*}
  meaning that $c^\ell \cdot a \in \Stab(q)$.  So we have shown
  \begin{equation*}
    c \in K^\times \implies c^\ell \cdot \Stab(q) \subseteq \Stab(q).
  \end{equation*}
  By Lemma~\ref{first}, $\Stab(q)$ is $\{0\}$ or $K$.  But in the
  latter case, $q$ is generic, a contradiction.
\end{proof}
It follows that for any $a \in K$, the canonical base $\operatorname{CB}(a+q)$ of the type $a+q$ is $a$.  Indeed, if $\sigma \in \Aut(\Mm)$, then $\sigma(a+q) = \sigma(a)+q$, which equals $a+q$ if and only if $\sigma(a)=a$.
\begin{lemma} \label{third}
  Let $a, b_0, b_1, b_2, \ldots$ be a Morley sequence of $p$ over $\varnothing$.  (Equivalently, $a, b_0, b_1, b_2, \ldots$ are an independent sequence of generics over $\varnothing$.)  Then there is some $N \ge 1$ such that
  \begin{equation*}
    a \in \dcl(a+b_0^\ell, a+b_1^\ell, a+b_2^\ell, \ldots, a+b_N^\ell).
  \end{equation*}
\end{lemma}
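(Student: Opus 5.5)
We view the elements $c_i := a + b_i^\ell$ as a Morley sequence of the global type $a+q$. Then $a$ is the canonical base of $a+q$, and canonical bases of types in a stable theory lie in the definable closure of any infinite Morley sequence. Compactness cuts this down to finitely many terms.

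\emph{Step 1: $(c_i)_{i<\omega}$ is a Morley sequence in $a+q$ over $a$.} Since $q$ is a $0$-definable global type, $a+q$ is a global type definable over $a$. The hypothesis says $b_i$ realizes $p|a b_0\cdots b_{i-1}$. Pushing forward under $x\mapsto x^\ell$, the element $b_i^\ell$ realizes $q|a b_0 \cdots b_{i-1}$. Translating by $a$, $c_i$ realizes $(a+q)|a b_0\cdots b_{i-1}$. This contains $(a+q)|a c_0\cdots c_{i-1}$, because each $c_j$ with $j<i$ lies in $\dcl(a,b_j)$. So $(c_i)$ is a Morley sequence of $a+q$ over $a$, and in particular it is indiscernible.

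\emph{Step 2: $a \in \dcl(c_0,c_1,\ldots)$.} In a stable theory, a global definable type $r$ is the average type of any infinite Morley sequence $I$ in it. Concretely, for each formula $\phi(x,y)$ and each parameter $e$, we have $\phi(x,e)\in r$ if and only if $\models\phi(c_i,e)$ for all but finitely many $i$. Since $\phi$ has NIP, it suffices to check a bounded number of indices.

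Hence any $\sigma\in\Aut(\Mm)$ fixing $I=(c_i)$ pointwise satisfies $\sigma(r)=r$. For $r=a+q$, the remark following the triviality of $\Stab(q)$ shows that $\sigma(a+q)=a+q$ forces $\sigma(a)=a$. Thus every automorphism fixing the small set $\{c_i\}$ fixes $a$. By saturation and homogeneity of $\Mm$, this gives $a\in\dcl(c_0,c_1,\ldots)$.

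\emph{Step 3: compactness.} Definable closure has finite character, so $a\in\dcl(c_0,\ldots,c_N)$ for some $N$. Enlarging $N$ if necessary, we may take $N\ge 1$.

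The main point needing care is Step 2, namely the standard fact that a global type in a stable theory is determined by, and definable over, any infinite Morley sequence of it. If a more self-contained argument is preferred, one can instead argue directly. The $\phi$-definition of $a+q$ is equivalent to a positive Boolean combination of instances $\phi(c_i,y)$, so $\Aut(\Mm/I)$ preserves $a+q$. Then invoke $\operatorname{CB}(a+q)=a$ as computed above.
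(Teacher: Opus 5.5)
Your proposal is correct and is essentially the paper's own argument. The paper's one-line proof says that $(a+b_i^\ell)_i$ is a Morley sequence of $a+q$, so its definable closure contains $\operatorname{CB}(a+q)=a$, with a finite $N$ then coming from finite character of $\dcl$. You simply unpack the standard ingredients the paper leaves implicit: the Morley-sequence verification, and the fact that in a stable theory a global definable type is the average type of any infinite Morley sequence in it.
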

\begin{proof}
  The sequence $a+b_i^\ell$ is a Morley sequence of $a+q$, so its
  definable closure contains $\operatorname{CB}(a+q)=a$.
\end{proof}
Fix $N$ as in Lemma~\ref{third}.
  If $a, b_0, b_1, \ldots, b_N$ are independent generics over $\varnothing$ (i.e., a realization of $p^{\otimes(N+2)}|\varnothing$), then
  \begin{equation*}
    a \in \dcl(a+b_0^\ell, a+b_1^\ell, \ldots, a+b_N^\ell).
  \end{equation*}
  Consequently,
  \begin{equation*}
    \{b_0,b_1,\ldots,b_N\} \subseteq
    \acl(a+b_0^\ell,a+b_1^\ell,\ldots,a+b_N^\ell).
  \end{equation*}
  By Lemma~\ref{zeroth}, we have
  \begin{equation*}
    \{b_0,b_1,\ldots,b_N\} \subseteq \acl(b_1^\ell-b_0^\ell,b_2^\ell-b_0^\ell,\ldots,b_N^\ell-b_0^\ell).
  \end{equation*}

Let $\bar{u}$ be the $N$-tuple $(b_1^\ell-b_0^\ell,\ldots,b_N^\ell-b_0^\ell)$.
Let $\phi(x_0,\ldots,x_N,y_1,\ldots,y_N)$ be a formula (without
parameters) such that
\begin{itemize}
\item $\phi(\bar{b};\bar{u})$ is true.
\item $\phi(\Mm;\bar{c})$ is finite for any $N$-tuple $\bar{c}$.
\end{itemize}
Let $f : K^{N+1} \to K^N$ be the function $f(x_0,x_1,\ldots,x_N) = (x_1^\ell-x_0^\ell,x_2^\ell-x_0^\ell,\ldots,x_N^\ell-x_0^\ell)$, so that $f(\bar{b}) = \bar{u}$.  Let $D$ be the 0-definable set
\begin{equation*}
  D = \{\beta \in K^{N+1} : \Mm \models \phi(\beta,f(\beta))\}.
\end{equation*}
Then $D$ contains $(b_0,b_1,\ldots,b_N)$ which realizes
$p^{\otimes(N+1)}|\varnothing$, so $D$ is generic in $(K^{N+1},+)$.
Finitely many additive translates cover $K^{N+1}$.

Also, the function $f : D \to K^N$ is finite-to-one.  Therefore we can
cover $K^{N+1}$ with finitely many sets $D_1 \cup D_2 \cup \cdots \cup
D_m$ (namely, translates of $D$) each of which admits a finite-to-one definable map $D_i \to K^N$.
Shrinking the $D_i$ to be a partition, and gluing the maps, we get a
finite-to-one definable map $K^{N+1} \to K^N$, contradicting
Lemma~\ref{sdep}.  This contradiction completes the proof of Theorem~\ref{thm:main-copy}.

\begin{ai}
    Generative AI was not used to write this paper or generate any of the ideas presented here.
\end{ai}

\bibliographystyle{plain}
\bibliography{ref}

\begin{thebibliography}{10}

\bibitem{Cherlin-Shelah}
G.~Cherlin and S.~Shelah.
\newblock Superstable fields and groups.
\newblock {\em Ann. Math. Logic}, 18(3):227--270, 1980.

\bibitem{hhj1}
Yatir Halevi, Assaf Hasson, and Franziska Jahnke.
\newblock A conjectural classification of strongly dependent fields.
\newblock {\em Bulletin of Symbolic Logic}, 25(2):182--195, June 2019.

\bibitem{Halevi-Palacin}
Yatir Halevi and Daniel Palac\'in.
\newblock The dp-rank of abelian groups.
\newblock {\em J. Symb. Log.}, 84(3):957--986, 2019.

\bibitem{dp-finite-fields-vi}
Will Johnson.
\newblock Dp-finite fields {VI}: the dp-finite {S}helah conjecture.
\newblock {arXiv:2005.13989v1 [math.LO]}, May 2020.

\bibitem{dp-minimal}
Will Johnson.
\newblock The classification of dp-minimal and dp-small fields.
\newblock {\em J. Eur. Math. Soc. (JEMS)}, 25(2):467--513, 2023.

\bibitem{dp-add}
Itay Kaplan, Alf Onshuus, and Alexander Usvyatsov.
\newblock Additivity of the dp-rank.
\newblock {\em Trans. Amer. Math. Soc.}, 365(11):5783--5804, 2013.

\bibitem{Kaplan-Scanlon-Wagner}
Itay Kaplan, Thomas Scanlon, and Frank~O. Wagner.
\newblock Artin-{S}chreier extensions in {NIP} and simple fields.
\newblock {\em Israel J. Math.}, 185:141--153, 2011.

\bibitem{Krunpinski-Pillay}
Krzysztof Krupi\'nski and Anand Pillay.
\newblock On stable fields and weight.
\newblock {\em J. Inst. Math. Jussieu}, 10(2):349--358, 2011.

\bibitem{Macintyre}
Angus Macintyre.
\newblock On {$\omega \sb{1}$}-categorical theories of fields.
\newblock {\em Fund. Math.}, 71(1):1--25. (errata insert), 1971.

\bibitem{firstDP}
Alf Onshuus and Alexander Usvyatsov.
\newblock On dp-minimality, strong dependence and weight.
\newblock {\em J. Symbolic Logic}, 76(3):737--758, 2011.

\bibitem{poizat}
Bruno Poizat.
\newblock {\em Stable groups}, volume~87 of {\em Mathematical Surveys and
  Monographs}.
\newblock American Mathematical Society, Providence, RI, 2001.
\newblock Translated from the 1987 French original by Moses Gabriel Klein.

\bibitem{Shelah-Dependent}
Saharon Shelah.
\newblock Strongly dependent theories.
\newblock {\em Israel J. Math.}, 204(1):1--83, 2014.

\bibitem{NIPguide}
Pierre Simon.
\newblock {\em A guide to {NIP} theories}, volume~44 of {\em Lecture Notes in
  Logic}.
\newblock Association for Symbolic Logic, Chicago, IL; Cambridge Scientific
  Publishers, Cambridge, 2015.

\end{thebibliography}
\end{document}